\let\oldbibliography\thebibliography
\renewcommand{\thebibliography}[1]{%
  \oldbibliography{#1}%
  \setlength{\itemsep}{12pt}%
}
\newtheorem{prop}{Proposici\'on}
\newtheorem{ej}{Ejemplo}
\newtheorem{defi}{Definici\'on}
\newtheorem{observ}{Observaci\'on}
\begin{document}

\title*{Condiciones para obtener factorizaciones únicas en la categoría
$\text{\textbf{Rel}}(D^{\#})$  \\
\bigskip Conditions for uniqueness of factorizations on the $\text{\textbf{Rel}}(D^{\#})$ category }
\titlerunning{Factorizaciones en $\text{\textbf{Rel}}(D^{\#})$ }
\author{David Fernando Méndez Oyuela}
\institute{ David Fernando Méndez Oyuela, M.S., M.Ed. \at Profesor Titular II, Departamento de Matemática Pura, Universidad Nacional Autónoma de Honduras (UNAH), Tegucigalpa, Honduras, \email{david.mendez@unah.edu.hn}}
%
% Use the package "url.sty" to avoid
% problems with special characters
% used in your e-mail or web address
%
\maketitle
\abstract{En este trabajo se analiza un posible vínculo entre la Teoría de Categorías y la Teoría de Factorizaciones Generalizadas desarrolladas por Anderson y Frazier. Específicamente bajo el contexto de trabajos previos, donde se analizan composiciones de relaciones y su vínculo con las $\tau$-factorizaciones. \\
\textbf{Palabras Claves}  categorías, dominios integrales, $\tau$-factorizaciones\\  
\newline
\textbf{Abstract} This paper analyzes a possible link between Category Theory and Generalized Factorization Theory developed by Anderson and Frazier. Specifically in the context of what has been worked on in previous works, where compositions of relations and their link with $\tau$-factorizations are analyzed. \\
\textbf{Keywords} categories, $\tau$-factorizations, integral domains
}

%%%%%%%%%%%%%%%%%%%%
%%%%%%%%%%%%%%%%%%%%
\section{Introducción}

Inspirados en los trabajos de \shortciteA{mcadam} y \shortciteA{zafru}, \shortciteA{frazier} desarrollaron el concepto
de $\tau$-factorzación, el mismo fue estudiado en forma más amplia
por, entre otros, \shortciteA{ortiz} y \shortciteA{juett}, más adelante
Ortiz desarrolló varios proyectos de investigación sobre el tema,
para más detalles, ver por ejemplo \shortciteA{vargas}, \shortciteA{serna}, \shortciteA{molina},
\shortciteA{barrios} y \shortciteA{calderon}. La maquinaria de la teoría de categorías ofrece una forma
de determinar qué condiciones se deben dar para que dada una relación
$\tau$ en un conjunto $D^{\#}$ (más adelante se especificará sobre
esta notación) se pueden conseguir dos relaciones $\tau_{1}$ y $\tau_{2}$
tales que $\tau=\tau_{1}\circ\tau_{2}$, cuestión que brinda información
útil para los objetivos del estudio de \shortciteA{mendez}. Se presentan en la siguiente sección los conceptos necesarios para comprender
este proceso, los mismos son basados en \shortciteA{castellini}
y \shortciteA{adamek}.

\section{Conceptos básicos}
\begin{defi} %%
\normalfont
Una categoría es una cuarteta $\mathcal{A}=\left\{ \mathcal{O},\,\text{hom},\,id,\,\circ\right\} $
que consiste de:

\begin{enumerate}
\item una clase $\mathcal{O}$, cuyos miembros se llaman $\mathcal{A}$-objetos,
\item para cada par $(A,B)$ de $\mathcal{A}$-objetos, un conjunto $\text{hom}(A,B)$
, cuyos miembros se llaman $\mathcal{A}$-morfismos de $A$ a $B$.
Un $\mathcal{A}$-morfismo $f\in\text{hom}(A,B)$ se suele denotar
como $f:A\longrightarrow B$ ó $A\overset{f}{\longrightarrow}B$,
\item para cada $\mathcal{A}$-objeto $A$, un morfismo $A\overset{id_{A}}{\longrightarrow}A$,
llamado la $A$-identidad en $A$,
\item una ley de composición, que asocia a cada $\mathcal{A}$-morfismo
$A\overset{f}{\longrightarrow}B$ y cada $\mathcal{A}$-morfismo $B\overset{g}{\longrightarrow}C$
otro $\mathcal{A}$-morfismo $A\overset{g\circ f}{\longrightarrow}C$
(llamado la composición de $f$ con $g$) con las siguientes condiciones:
\begin{enumerate}
\item la composición es asociativa: dados los morfismos $A\overset{f}{\longrightarrow}B$,
$B\overset{g}{\longrightarrow}C$ y $C\overset{h}{\longrightarrow}D$,
la ecuación $h\circ(g\circ f)=(h\circ g)\circ f$ se satisface,
\item $A$-identidades actúan como identidades respecto a la composición:
dado el $\mathcal{A}$-morfismo $A\overset{f}{\longrightarrow}B$,
se tiene que $id_{B}\circ f=f$ y $f\circ id_{A}=f$,
\item los conjuntos $\text{hom}(A,B)$ son mutuamente exclusivos.
\end{enumerate}
Por simplicidad, cuando no hay ambigüedad sobre la categoría en cuestión,
a los $\mathcal{A}$-objetos y $\mathcal{A}$-morfismos se les llama
objetos y morfismos respectivamente. También una categoría $\mathcal{A}=\left\{ \mathcal{O},\,\text{hom},\,id,\,\circ\right\} $
se denota simplemente como $\mathcal{A}$.

\end{enumerate}
\end{defi} %%

\begin{observ} %%
\normalfont
Dada una categoría $\mathcal{A}=(\mathcal{O},\text{hom},id,\circ)$,
entonces se tienen las siguientes observaciones:

\begin{enumerate}
\item Aunque la notación que se usa para los $\mathcal{A}$-morfismos es la misma que se usa para funciones, no necesariamente un $\mathcal{A}$-morfismo
es una función (ver Ejemplos).
\item La clase $\mathcal{O}$ de $\mathcal{A}$-objetos es denotada por
$Ob(\mathcal{A})$. Obsérvese la distinción al mencionar conjuntos
y clases, esto se hace para evitar dificultades técnicas en términos
de paradojas que se pueden presentar si ambos se consideran como un
mismo ente. Se denomina clase a una colección de conjuntos y conglomerado
a una colección de clases. La palabra colección se usa como término
primitivo para hablar en general de la noción de conjuntos.
\item La clase de todos los $\mathcal{A}$-morfismos (denotada por $Mor(\mathcal{A})$)
está definida como la unión de todos los conjuntos $\text{hom}(A,B)$
en $\mathcal{A}$.
\item Si $A\overset{f}{\longrightarrow}B$ es un $\mathcal{A}$-morfismo,
a $A$ se le llama el dominio de $f$ (denotado por $Dom(f)$) y a
$B$ el codominio de $f$ (denotado por $Codom(f)$). Obsérvese que
la condición (c) garantiza que cada $\mathcal{A}$-morfismo tiene
un único dominio y un único codominio. Sin embargo, esta condición
está dada solamente por razones técnicas pues cuando las demás condiciones
se satisfacen, es "fácil'' forzar la condición (c) reemplazando
cada morfismo $f\in\text{hom}(A,B)$ por una tripleta $(A,f,B)$,
por esta razón, cuando se desea comprobar que un objeto es una categoría,
se hace caso omiso de la condición (c).
\item La composición, $\circ$, es una operación binaria parcial en la clase
$Mor(\mathcal{A})$. Es decir, para cada par $(f,g)$ de morfismos,
$f\circ g$ está definida si y solo si el dominio de $f$ coincide
con el codominio de $g$.
\item Si más de una categoría se está involucrando, es conveniente introducir
subíndices, por ejemplo: $\text{hom}_{\mathcal{A}}(A,B)$.\\
Se considera ahora una serie de ejemplos para comprender la definición
del concepto de categoría. 
\end{enumerate}
\end{observ}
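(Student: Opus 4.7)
El plan consiste en reconocer que los numerales de esta observación se dividen en dos grupos: aquellos puramente notacionales o definicionales (items 1, 2, 3 y 6) que no requieren demostración, y aquellos que contienen contenido matemático susceptible de verificación (items 4 y 5). Me concentraría en estos últimos, cuyo ingrediente clave es la condición (c) de la definición previa de categoría, que exige la mutua exclusividad de los conjuntos $\text{hom}(A,B)$.

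Para la unicidad del dominio y codominio afirmada en el item 4, procedería por contradicción: si un morfismo $f$ perteneciera simultáneamente a $\text{hom}(A,B)$ y a $\text{hom}(A',B')$ con $(A,B)\ne(A',B')$, estos dos conjuntos compartirían el elemento $f$, lo que contradice la condición (c). Se deduce así la buena definición de $Dom(f)$ y $Codom(f)$. Para justificar el "truco" de las tripletas, introduciría el reemplazo $\text{hom}'(A,B)=\{(A,f,B):f\in\text{hom}(A,B)\}$, de modo que dos conjuntos $\text{hom}'(A,B)$ y $\text{hom}'(A',B')$ asociados a pares distintos resultan automáticamente disjuntos, ya que sus elementos difieren al menos en la primera o la tercera coordenada. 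Trasladando identidades y composición mediante $id'_A=(A,id_A,A)$ y $(B,g,C)\circ'(A,f,B)=(A,g\circ f,C)$, se verificaría de forma rutinaria que las condiciones (a) y (b) siguen cumpliéndose y que ahora (c) se satisface por construcción, sin hipótesis adicional alguna.

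Para el item 5, la afirmación de que $\circ$ es una operación binaria parcial en $Mor(\mathcal{A})$ es consecuencia directa del inciso (iv) de la definición: $g\circ f$ queda determinada únicamente cuando existen objetos $A,B,C$ tales que $f\in\text{hom}(A,B)$ y $g\in\text{hom}(B,C)$, lo que equivale a $Codom(f)=Dom(g)$. En virtud del item 4, esta condición carece de ambigüedad, por lo que la parcialidad queda establecida sin más.

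El punto que demanda mayor cuidado es la verificación formal del "truco" de las tripletas, pues aunque la idea es transparente, hay que comprobar que la asociatividad y el comportamiento de las identidades se preservan bajo la biyección $f\leftrightarrow(A,f,B)$; esto es rutinario pero es el único paso donde se hace algo más que aplicar la definición. El resto de las observaciones son, en cambio, o bien tautológicas o bien consecuencias inmediatas del enunciado de categoría.
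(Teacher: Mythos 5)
Tu propuesta es correcta y sigue esencialmente el mismo camino que el texto: la observación del artículo no se demuestra formalmente, pero ya atribuye la unicidad de dominio y codominio a la condición (c) y esboza el reemplazo de $f$ por la tripleta $(A,f,B)$, que es exactamente lo que tú desarrollas con detalle (disyunción de los $\text{hom}'(A,B)$, traslado de identidades y composición, y la parcialidad de $\circ$ como consecuencia del inciso (iv)). Identificas bien que los demás numerales son puramente notacionales y no requieren verificación.
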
 %%
\begin{ej}%%
\normalfont
\begin{enumerate}
\item La categoría \textbf{Set}, cuya clase de objetos es la clase de
conjuntos, $\text{hom}(A,B)$ es el conjunto de todas las funciones
de $A$ a $B$, $id_{A}$ es la función identidad en $A$ y $\circ$
es la composición usual de funciones.
\item La categoría \textbf{Vec}, cuya clase de objetos son todos los
espacios vectoriales, $\text{hom}(V,W)$ es el conjunto de todas las
transformaciones lineales de $V$ a $W$, $id_{V}$ es la transformación
lineal identidad y $\circ$ es la composición de transformaciones
lineales.
\item La categoría \textbf{Grp}, cuyos objetos son todos los grupos,
los morfismos son los homomorfismos entre grupos, $id_{G}$ es el
homomorfismo identidad y $\circ$ es la composición de homomorfismos.
\item La categoría \textbf{Top}, cuyos objetos son todos los espacios
topológicos, los morfismo son las funciones continuas entre espacios
topológicos, la identidad es la función continua identidad(entre el
mismo espacio topológico) y $\circ$ la composición de funciones continuas.
\item La categoría \textbf{Rel}, cuyos objetos son los conjuntos y los
morfismos son relaciones binarias entre conjuntos, la identidad es
la relación identidad y $\circ$ es la composición de relaciones.
El lector notará que ésta es la categoría de interés en este estudio.
\end{enumerate}
\end{ej} %%
\hspace{1cm}Debido a que en muchas categorías los morfismos son funciones,
se adopta la notación que usualmente se utiliza para éstas. Al morfismo
$h=g\circ f$ a veces se denota como $A\overset{f}{\longrightarrow}B\overset{g}{\longrightarrow}C$
o diciendo que el diagrama triangular 
\[
\xymatrix{A\ar[r]^{f}\ar[rd]_{h} & B\ar[d]^{g}\\
 & C
}
\]
es conmutativo. Similarmente, al decir que el diagrama cuadrado 
\[
\xymatrix{A\ar[r]^{f}\ar[d]_{h} & B\ar[d]^{g}\\
C\ar[r]_{k} & D
}
\]
conmuta, esto implica que $g\circ f=k\circ h$.\\
Los siguientes conceptos relacionados a morfirmos son necesarios para
comprender la ``estructura de factorización'' que permitirá decidir
cuando un morfismo $f$ se puede factorizar como $f=g\circ h$, con
particular interés en el caso de la categoría \textbf{Rel}.\bigskip
\begin{defi}%%
\normalfont
Sea $\mathcal{X}$ una categoría. 
\begin{enumerate}
\item Un morfismo $X\overset{f}{\longrightarrow}Y$ en $\mathcal{X}$ es
llamado un isomorfismo, si existe un morfismo $Y\overset{g}{\longrightarrow}X$
tal que $g\circ f=id_{X}$ y $f\circ g=id_{Y}$.
\item Un morfismo $M\overset{m}{\longrightarrow}X$ en $\mathcal{X}$ es
llamado monomorfismo, si para todo $f,\,g\,:\,Y\longrightarrow M$
morfismos en $\mathcal{X}$, tales que si $m\circ f=m\circ g$ , entonces
$f=g$. También se dice que el par $(M,m)$ (o simplemente $m$) es
un subobjeto de $X$.
\item Un morfismo $X\overset{e}{\longrightarrow}E$ en $\mathcal{X}$ es
llamado epimorfismo, si para todo $f,\,g\,:\,E\longrightarrow Y$
morfismos en $\mathcal{X}$ tales que $f\circ e=g\circ e$, entonces
$f=g$.
\item Un morfismo $M\overset{m}{\longrightarrow}X$ en $\mathcal{X}$ es
llamado sección, si existe un morfismo $X\overset{f}{\longrightarrow}M$
tal que $f\circ m=id_{M}$.
\item Un morfismo $X\overset{e}{\longrightarrow}E$ en $\mathcal{X}$ es
llamado retracción, si existe un morfismo $E\overset{g}{\longrightarrow}X$
tal que $e\circ g=id_{E}$.
\item Una familia de morfismos con dominio común $\left(X\overset{f_{i}}{\longrightarrow}Y_{i}\right)_{i\in I}$,
indexada por una clase $I$, es llamada una fuente (source). Similarmente
para morfismos de codominio común se define un sumidero (sink).
\item Una fuente $\left(X\overset{f_{i}}{\longrightarrow}Y_{i}\right)_{i\in I}$
es llamada mono-fuente (monosource) si por cada par de morfismos $h,\,k\,:\,Z\longrightarrow X$,
$f_{i}\circ h=f_{i}\circ k$, para cada $i\in I$ implica que $h=k$.
Similarmente, para sumideros se define el concepto de epi-sumidero
(episink). 
\end{enumerate}
\end{defi}%%
Mediante el uso de estos conceptos se puede introducir ahora la teoría
de estructuras de factorización para sumideros. Se asume que todos
los objetos y morfismos pertenecen a una categoría arbitraria pero
fija $\mathcal{X}$.\bigskip
\begin{defi}%%
\normalfont
Sea $\mathbf{E}$ un conglomerado de sumideros
y sea $\mathcal{M}$ una clase de morfismos. Se dice que $(\mathbf{E},\mathcal{M})$
es una estructura de factorización (para sumideros) en la categoría
$\mathcal{X}$ y que $\mathcal{X}$ es una $(\mathbf{E},\mathcal{M})$-categoría
(para sumideros) si:

\begin{enumerate}
\item tanto $\mathbf{E}$ como $\mathcal{M}$ son cerrados bajo composiciones
con isomorfismos, en particular, esto para $\mathbf{E}$ significa
que si $\left(X_{i}\overset{e_{i}}{\longrightarrow}Y\right)_{i\in I}$
es un sumidero en $\mathbf{E}$ y $Y\overset{h}{\longrightarrow}Z$
es un isomorfismo, entonces el sumidero $\left(X_{i}\stackrel{h\circ e_{i}}{\longrightarrow}Z\right)_{i\in I}$
está en $\mathbf{E}$;
\item $\mathcal{X}$ tiene $(\mathbf{E},\mathcal{M})$-factorizaciones (de
sumideros); es decir, cada sumidero $\mathbf{s}$ en $\mathcal{X}$
tiene una factorización $\mathbf{s}=m\circ\mathbf{e}$ donde $\mathbf{e}\in\mathbf{E}$
y $m\in\mathcal{M}$; 
\item $\mathcal{X}$ tiene una única $(\mathbf{E},\mathcal{M})$-propiedad
de diagonalización; es decir, si $Y\overset{s}{\longrightarrow}Z$
y $M\overset{m}{\longrightarrow}Z$ son $\mathcal{X}$-morfismos con
$m\in\mathcal{M}$ y $\mathbf{e}=\left(X_{i}\overset{e_{i}}{\longrightarrow}Y\right)_{i\in I}$
y $\mathbf{r}=\left(X_{i}\overset{r_{i}}{\longrightarrow}M\right)_{i\in I}$
son sumideros en $\mathcal{X}$ con $\mathbf{e}\in\mathbf{E}$, tales
que $m\circ\mathbf{r}=s\circ\mathbf{e}$ , entonces existe un único
morfismo diagonal $Y\overset{d}{\longrightarrow}M$ tal que para cada
$i\in I$ el siguiente diagrama conmuta: 
\[
\xymatrix{X_{i}\ar[r]^{e_{i}}\ar[d]_{r_{i}} & Y\ar[d]^{s}\ar[dl]_{d}\\
M\ar[r]_{m} & Z
}
\]
Cabe destacar que cualquier $(\mathbf{E},\mathcal{M})$-categoría
para sumideros es también una $(\mathcal{E},\mathcal{M})$-categoría
para morfismos individules, donde $\mathcal{E}$ consiste de todos
los morfismos (que se pueden ver como sumideros con un solo morfismo)
que pertenecen a $\mathbf{E}$. En la Sección \ref{sec:Estructura-de-factorizaci=0000F3n}
se hará el estudio sobre esta estructura de factorización aplicada
particularmete a la categoría \textbf{Rel}. 
\end{enumerate}
\end{defi} %%

\section{Estructura de factorización en la categoría $\text{\textbf{Rel}}(D^{\#})$\label{sec:Estructura-de-factorizaci=0000F3n}}

\hspace{1cm}Como se introdujo en la sección anterior, dada la categoría
\textbf{Rel}, formada por conjuntos y relaciones entre ellos, se puede
construir una $(\mathbf{E},\mathcal{M})$-estructura de factorización
para conglomerados de sumideros, donde $\mathbf{E}$ es un conglomerado
de sumideros y $\mathcal{M}$ una clase de morfismos, además en particular
también se tiene una $(\mathcal{E},\mathcal{M})$-estructura de factorización
donde $\mathcal{E}$ es el conjunto de sumideros singuletes(vistos
como conglomerados con un solo elemento). En \shortciteA{castellini}
(tomando únicamente los puntos de más interés para este estudio) se prueba
el siguiente resultado donde especifica qué propiedades se tienen
cuando una categoría tiene una $(\mathbf{E},\mathcal{M})$-estructura
de factorización.
\begin{prop}
\label{proposition:4.9}
Si $\mathcal{X}$ es una $(\mathbf{E},\mathcal{M})$-categoría(para
sumideros) entonces $\mathbf{E}$ y $\mathcal{M}$ tienen las siguientes
propiedades:

\begin{enumerate}
\item $\mathcal{M}$ consiste de monomorfismos y $\mathbf{E}$ contiene
todos los epi-sumideros extremales.(Un epimorfismo $X\overset{e}{\longrightarrow}E$
es extremal si cuando se factoriza como $e=m\circ f$, con $m$ monomorfismo,
entonces $m$ es isomorfismo).
\item $\mathcal{M}$ contiene todos los isomorfismos y es cerrado bajo composición.
\item $\mathbf{E}$ es cerrado bajo composición, en el sentido que si $\left(X_{i}\overset{e_{i}}{\longrightarrow}Y\right)_{i\in I}$
es un sumidero en $\mathbf{E}$ y el morfismo $Y\overset{f}{\longrightarrow}Z$(visto
como un sumidero singulete) pertenece a $\mathbf{E}$ entonces el
sumidero $\left(X_{i}\overset{f\circ e_{i}}{\longrightarrow}Z\right)_{i\in I}$
también está en $\mathbf{E}$.
\item Las $(\mathbf{E},\mathcal{M})$-factorizaciones son esencialmente
únicas, esto es, si $\left((e_{i})_{i\in I},\,m\right)$ y $\left((f_{i})_{i\in I},\,n\right)$
son dos $(\mathbf{E},\mathcal{M})$-factorizaciones para el mismo
sumidero, entonces existe un isomorfismo $h$ tal que para cada $i\in I$,
el diagrama
\[
\xymatrix{X_{i}\ar[r]^{e_{i}}\ar[d]_{f_{i}} & M\ar[d]^{m}\ar[dl]_{h}\\
N\ar[r]_{n} & X
}
\]
conmuta.
\item $\mathcal{M}\cap\mathbf{E}$ consiste de todos los isomorfismos. 
\item $\mathcal{M}$ es cerrado bajo primeros factores relativos a $\mathcal{M}$,
es decir, si $n\circ m\in\mathcal{M}$ y $n\in\mathcal{M}$, entonces
$m\in\mathcal{M}$, en consecuencia, si $(e,n)$ es la $(\mathbf{E},\mathcal{M})$-factorización
de $m\in\mathcal{M}$, entonces $e$ debe ser un isomorfismo.
\item Si $\mathbf{E}$ es un conglomerado de episumideros, entonces $\mathbf{E}$
es cerrado bajo segundos factores relativos a $\mathbf{E}$, es decir,
si $g\circ f\in\mathbf{E}$ y $f\in\mathbf{E}$ entonces $g\in\mathbf{E}$.
\end{enumerate}
\end{prop}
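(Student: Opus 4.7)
El plan es demostrar las siete afirmaciones en el orden $(2), (1), (4), (3), (5), (6), (7)$, utilizando fundamentalmente la propiedad de diagonalización única y la existencia de $(\mathbf{E},\mathcal{M})$-factorizaciones garantizadas por la definición de estructura de factorización. La idea general consiste en traducir cada propiedad a un problema de diagonalización apropiado, o a una comparación entre dos factorizaciones distintas del mismo sumidero, y emplear repetidamente la unicidad del morfismo diagonal.

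Comenzaría por (2). Para ver que todo isomorfismo $\phi\colon X\to Y$ pertenece a $\mathcal{M}$, factorizaría $\phi = m\circ e$ con $e\in\mathbf{E}$ y $m\in\mathcal{M}$, y luego aplicaría la propiedad de diagonalización única al cuadrado conmutativo formado por $e$, $\phi$, $m$ e $id_Y$ para producir un inverso explícito de $m$, de donde $m$ es isomorfismo; por cerradura de $\mathcal{M}$ bajo composición con isomorfismos se sigue $\phi\in\mathcal{M}$. La clausura de $\mathcal{M}$ bajo composición se obtiene factorizando $m_2\circ m_1 = n\circ e$ con $n\in\mathcal{M}$ y $e\in\mathbf{E}$, y usando diagonalización para mostrar que $e$ es isomorfismo.

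Para (1), la parte de monomorfismos es la más delicada: dados $f,g\colon Y\to M$ con $m\circ f=m\circ g$, la estrategia es considerar el sumidero $\mathbf{e}=(id_Y,id_Y)$ indexado por $\{1,2\}$ y el sumidero $\mathbf{r}=(f,g)$, y forzar por la unicidad del diagonal que $f=g$; esto requiere antes verificar que $(id_Y,id_Y)\in\mathbf{E}$, lo cual se establece factorizando dicho sumidero y empleando (2) para concluir que la parte en $\mathcal{M}$ es un isomorfismo, junto con la cerradura bajo composición con isomorfismos. La afirmación sobre epi-sumideros extremales sigue factorizando el epi-sumidero, observando que la parte en $\mathcal{M}$ es monomorfismo por lo ya probado y epimorfismo por división de un epi-sumidero, y aplicando la definición de extremalidad. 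Para (4), dadas dos factorizaciones del mismo sumidero, la diagonalización aplicada en ambos sentidos produce morfismos $h,h'$ entre los objetos intermedios, y la unicidad del diagonal garantiza que $h'\circ h = id$ y $h\circ h' = id$.

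Las partes (3), (5), (6) y (7) son consecuencias más directas. La propiedad (3) se deduce asociando composiciones y aplicando (4) a la factorización del sumidero compuesto; (5) combina (1) y (2) con la unicidad esencial para la inclusión recíproca; las partes (6) y (7) comparan factorizaciones apropiadas de composiciones y usan (4) para concluir que ciertos morfismos intermedios son isomorfismos. El obstáculo principal será el paso técnico de (1), a saber, verificar que sumideros del tipo $(id_Y,id_Y)$ pertenecen a $\mathbf{E}$: sin este hecho la estrategia natural para demostrar que los morfismos en $\mathcal{M}$ son monomorfismos falla, y el argumento debe entrelazarse cuidadosamente con (2) para evitar circularidades, razón por la cual en el orden propuesto se demuestra (2) antes que (1).
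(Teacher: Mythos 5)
Primero, una observación sobre el punto de comparación: el artículo \emph{no demuestra} esta proposición, sino que la enuncia remitiendo su prueba a la literatura (``En Castellini \dots se prueba el siguiente resultado''); se trata, en esencia, del resultado clásico sobre $(\mathbf{E},\mathcal{M})$-categorías para sumideros que aparece dualizado en Ad\'amek--Herrlich--Strecker, \S 15. Por tanto no hay una prueba propia del artículo con la cual contrastar tu propuesta y solo puedo evaluarla por sus méritos. Tu arquitectura general (diagonalización única, existencia de factorizaciones y cerradura bajo composición con isomorfismos, ordenando las partes para evitar circularidades) es la estrategia estándar, y varias piezas funcionan: por ejemplo, para (2), el cuadrado con tope $e$, lado derecho $m$, morfismo de $\mathcal{M}$ igual a $m$ y $r=e$ admite como diagonales tanto $id$ como $e\circ\phi^{-1}\circ m$, y la unicidad fuerza $e\circ\phi^{-1}\circ m=id$, de donde $m$ (y luego $e$, y luego $\phi$) queda controlado. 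También tu prueba de (4) por doble diagonalización es la correcta.

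El hueco genuino está exactamente donde tú mismo lo señalas como el paso crítico de (1): la afirmación de que el sumidero $(id_Y,id_Y)\in\mathbf{E}$. Tu justificación ---factorizarlo como $n\circ(e_1,e_2)$ y ``emplear (2) para concluir que la parte en $\mathcal{M}$ es un isomorfismo''--- no se sostiene tal como está escrita: de $n\circ e_1=n\circ e_2=id_Y$ solo se obtiene que $n$ es una retracción, y la parte (2) (que $\mathcal{M}$ contiene los isomorfismos y es cerrada bajo composición) no da ningún medio para concluir que una retracción perteneciente a $\mathcal{M}$ sea un isomorfismo; la vía natural (retracción más monomorfismo implica isomorfismo) presupone precisamente la parte (1), que es lo que se está demostrando, de modo que el argumento es circular. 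Tampoco se salva con el truco de las dos diagonales que sí funciona para el sumidero singulete $id_Y=n'\circ e'$ (allí $e'\circ n'$ es una segunda diagonal del mismo cuadrado y la unicidad fuerza $e'\circ n'=id$): para el sumidero de dos elementos el candidato $e_1\circ n$ cumple $(e_1\circ n)\circ e_2=e_1$, que no tiene por qué coincidir con $e_2$, así que no es diagonal del cuadrado con $r=(e_1,e_2)$ y la unicidad no produce contradicción alguna. Este no es un detalle rutinario omitido sino el núcleo técnico de la parte (1); la demostración correcta requiere un argumento de diagonalización más elaborado, y te sugiero cotejarla con la prueba de la proposición dual en Ad\'amek--Herrlich--Strecker o en el texto de Castellini que cita el artículo. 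El resto del plan ((3), (5), (6), (7)) es plausible como esbozo, pero queda al mismo nivel de enunciado de intenciones.
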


\hspace{1cm}Cabe destacar que en el caso de este estudio, se trabaja
con una subcategoría de \textbf{Rel}, que tiene por objetos a todos
los subconjuntos de $D^{\#}$, donde $D$ es un dominio integral arbitrario,
$D^{\#}$ es el conjunto de elementos distintos de cero y no invertibles
de $D$ y cuyos morfismos son todas las relaciones binarias entre
subconjuntos de $D^{\#}$, se denota esta subcategoría como $\text{\textbf{Rel}}(D^{\#})$.\\
Para poder utilizar el resultado anterior al caso particular de \textbf{Rel}$(D^{\#})$,
se necesita comprender todos los conceptos involucrados aplicados
a dicha categoría. Se procede a analizar entonces cada concepto. Se
caracterizan primero los conceptos de monomorfismo, epimorfismo e
isomorfismo en \textbf{Rel$(D^{\#})$}. Los siguientes conceptos y
resultados están basados en el trabajo de \shortciteA{fayn}.\bigskip
\begin{defi}
\normalfont
 Dado un morfismo $\tau\in\text{\textbf{Rel}}(D^{\#})$.

\begin{enumerate}
\item La imagen de $\tau$ se define como $Im\tau=\left\{ b\in B\,:\,\exists a\in A,\,\text{con }a\tau b\right\} $.
Obsérvese que $Im\tau\subseteq Codom\tau$.
\item La coimagen de $\tau$ se define como $Coim\tau=Im\text{\ensuremath{\tau}}^{-1}$.
\item $\tau$ es una correspondencia si $Coim\tau=Dom\tau$.
\item $\tau$ es una función parcial si para todo $x\in Dom\tau$, $\text{card}\tau\left[x\right]\leq1$.
(donde $\text{card}\tau\left[x\right]$ respresenta la cardinalidad
del conjunto $\tau\left[x\right]=\left\{ y\in Codom\tau\,:\,x\tau y\right\} $).
\item $\tau$ es sobreyectiva si $Im\tau=Codom\tau$. 
\item $\tau$ es inyectiva si para todo $x\in Codom\tau$, $\text{card}\tau^{-1}\left[x\right]\leq1$.
\item $\tau$ es biyectiva si es inyectiva y sobreyectiva. 
\item $\tau$ es una función si $\tau$ es una función parcial y una correspondencia. 
\end{enumerate}
\end{defi}
Considérese el siguiente ejemplo para comprender la definición: 
\begin{ej}
\normalfont
En $D^{\#}=\mathbb{Z}^{\#}$ considérense las siguientes relaciones,
como morfismos en $\text{\textbf{Rel}}(\mathbb{Z}^{\#})$ tales que
$\tau_{1}:\left\{ 2,3\right\} \longrightarrow\mathbb{Z}^{+}$, $\tau_{2}:\mathbb{Z}^{\#}\longrightarrow\mathbb{Z}^{\#}$
y definidos como:
\begin{eqnarray}
\tau_{1} & = & \left\{ (2,2),(2,3),(3,5),(3,7)\right\} \nonumber \\
\tau_{2} & = & \left\{ (n,2n)\,:\,n\in\mathbb{Z}^{\#}\right\} 
\end{eqnarray}
Luego se tiene para $\tau_{1}$ que $Dom\tau_{1}=\left\{ 2,3\right\} $,
$Codom\tau_{1}=\mathbb{Z}^{+}$, $Im\tau_{1}=\left\{ 2,3,5,7\right\} \subset\mathbb{Z}^{+}$
y $Coim\tau_{1}=\left\{ 2,3\right\} $. Además $\tau_{1}$ es una
correspondencia, no es una función parcial puesto que $(2,2)$ y $(2,3)\in\tau_{1}$,
entonces $\text{card}\tau_{1}\left[2\right]=2$, no es sobreyectiva
puesto que $\left\{ 2,3,5,6\right\} =Im\tau_{1}\neq Codom\tau_{1}=\mathbb{Z}^{\#}$
y es inyectiva puesto que $\text{card}\tau_{1}^{-1}\left[x\right]=1$
para todo $x\in Codom\tau_{1}$. Similarmente $Dom\tau_{2}=\mathbb{Z}^{\#}$,
$Codom\tau_{2}=\mathbb{Z}^{\#}$, $Im\tau_{2}=\mathbb{Z}^{\#}\backslash\left\{ 2\right\} $
y $Coim\tau_{2}=\mathbb{Z}^{\#}$ . Se tiene entonces que $\tau_{2}$
es una correspondencia, es función parcial puesto que para todo entero
$x\in\mathbb{Z}^{\#}$ se tiene que $\text{card}\tau_{2}\left[x\right]=1$
cuando $x$ es par y $\text{card}\tau_{2}\left[x\right]=0$ cuando
$x$ es impar, no es sobreyectiva pues que $\left\{ 2n\,:\,n\in\mathbb{Z}^{\#}\right\} =Im\tau_{2}\neq Codom\tau_{2}=\mathbb{Z}^{\#}$
y es inyectiva puesto que para todo $y\in Codom\tau$ se tiene que
$\text{card}\tau^{-1}\left[x\right]=1$ cuando $x$ es par(le corresponde
su mitad) y $\text{card}\tau^{-1}\left[x\right]=0$ cuando $x$ es
impar. En base a los conceptos definidos se obtiene el siguiente resultado:
\end{ej}
\begin{prop}
\label{proposition:section n retract}Dada
una relación $\tau$ en $\text{\textbf{Rel}}(D^{\#})$.

\begin{enumerate}
\item $\tau^{-1}\circ\tau=id_{Dom\tau}$ si y solo si $\tau$ es una correspondencia
inyectiva.
\item $\tau\circ\tau^{-1}=id_{Codom\tau}$ si y solo si $\tau$ es una función
parcial sobreyectiva.
\end{enumerate}
\end{prop}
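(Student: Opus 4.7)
El plan es reducir cada una de las dos equivalencias a la definición misma de composición de relaciones: por definición, $(a, a') \in \tau^{-1}\circ\tau$ si y solo si existe $b$ con $a\tau b$ y $a'\tau b$, y análogamente $(b, b')\in \tau\circ\tau^{-1}$ si y solo si existe $a$ con $a\tau b$ y $a\tau b'$. A partir de esto, cada igualdad con la identidad se descompone naturalmente en dos contenciones: una codifica una condición de existencia y la otra una condición de unicidad. Cabe notar que el argumento será puramente conjuntista y no dependerá de la estructura algebraica de $D$, por lo que funciona igualmente en $\textbf{Rel}$ que en $\textbf{Rel}(D^{\#})$.

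Para el inciso (1), primero probaría la contención $id_{Dom\tau}\subseteq \tau^{-1}\circ\tau$: esto equivale a que para todo $a\in Dom\tau$ exista $b\in Codom\tau$ con $a\tau b$, es decir, $Dom\tau\subseteq Coim\tau$; como la inclusión recíproca es automática, esta condición equivale a que $\tau$ sea correspondencia. Luego probaría la contención $\tau^{-1}\circ\tau\subseteq id_{Dom\tau}$: ésta afirma que si $a\tau b$ y $a'\tau b$ entonces $a=a'$, lo cual es precisamente $\text{card}\,\tau^{-1}[b]\leq 1$, es decir, la inyectividad de $\tau$. Combinando ambas contenciones se obtiene la equivalencia buscada.

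Para el inciso (2), aprovecharía la simetría con el inciso (1) vía la identidad $(\tau^{-1})^{-1}=\tau$ aplicando el inciso anterior a $\tau^{-1}$, puesto que $\tau^{-1}$ es correspondencia inyectiva si y solo si $\tau$ es función parcial sobreyectiva. Como alternativa directa, la contención $id_{Codom\tau}\subseteq \tau\circ\tau^{-1}$ equivale a que para cada $b\in Codom\tau$ exista $a$ con $a\tau b$, es decir, $Im\tau=Codom\tau$ (sobreyectividad), y la contención recíproca equivale a que $a\tau b$ y $a\tau b'$ impliquen $b=b'$, es decir, $\text{card}\,\tau[a]\leq 1$ (función parcial).

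No anticipo obstáculos serios en este argumento; se trata esencialmente de manipular con cuidado las definiciones. El punto más delicado será quizá distinguir nítidamente los papeles de $Dom\tau$ y $Codom\tau$ al invertir $\tau$, asegurando que las condiciones de existencia (\emph{correspondencia} o \emph{sobreyectividad}) se establezcan respecto del conjunto correcto, ya que una relación puede tener elementos en su dominio o codominio que no estén relacionados con ningún otro, y precisamente esto es lo que hay que descartar para recuperar la identidad completa sobre $Dom\tau$ o $Codom\tau$.
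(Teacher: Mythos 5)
Tu propuesta es correcta y sigue esencialmente el mismo camino que la prueba del artículo: desenrollar la definición de $\tau^{-1}\circ\tau$ y de $\tau\circ\tau^{-1}$, identificar que la contención $id\subseteq\tau^{-1}\circ\tau$ codifica la condición de existencia (correspondencia, resp.\ sobreyectividad) y la contención recíproca la de unicidad (inyectividad, resp.\ función parcial), y despachar el inciso (2) por la simetría $(\tau^{-1})^{-1}=\tau$, exactamente como hace el artículo al ``intercambiar los roles de $\tau$ y $\tau^{-1}$''. La única diferencia es organizativa: tú presentas cada contención como una equivalencia independiente con una de las dos propiedades, mientras que el artículo argumenta las dos direcciones del ``si y solo si'' por separado; el contenido es el mismo.
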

\begin{proof}
(1) $(\Longrightarrow)$ Por definición se tiene que $Coim\tau\subset Dom\tau$,
falta ver la otra contenencia. Para ello, sea $y\in Dom\tau$, como
$\tau^{-1}\circ\tau=id_{Dom\tau}$ se tiene que $yid_{Dom\tau}y$
si y solo si $y\tau^{-1}\circ\tau y$, así que existe $x\in D^{\#}$
tal que $y\tau x$ y $x\tau^{-1}y$, luego $y\in Im\tau^{-1}=Coim\tau$,
por tanto $\tau$ es una correspondencia. Ahora supóngase que $\tau^{-1}\left[x\right]=\left\{ y_{1},y_{2}\right\} $
con $y_{1},\,y_{2}\in Dom\tau$ y $y_{1}\neq y_{2}$, entonces $y_{1}\tau x$
y $y_{2}\tau x$ luego $x\tau^{-1}y_{2}$ y entonces $y_{1}\tau^{-1}\circ\tau y_{2}$,
pero como $\tau^{-1}\circ\tau=id_{Dom\tau}$ se tiene que $y_{1}id_{Dom\tau}y_{2}$
y por tanto $y_{1}=y_{2}$ y así $\text{card}\tau^{-1}\left[x\right]\leq1$.\\
$\left(\Longleftarrow\right)$ Como $Dom\tau=Coim\tau$, $id_{Dom\tau}\subset\tau^{-1}\circ\tau$
pues si $x\in Dom\tau$ y existe $y\in Codom\tau$ tal que $x\tau y$,
así $y\tau^{-1}x$ y luego $x\tau^{-1}\circ\tau x$. Por otro lado,
dados $x,\,y\in Dom\tau^{-1}\circ\tau$ tales que $x\tau^{-1}\circ\tau y$
entonces existe $z\in Codom\tau$ tal que $x\tau z$ y $z\tau^{-1}y$,
así $z\tau^{-1}x$, pero como $\tau$ es inyectiva, $\text{card}\tau^{-1}\left[z\right]\leq1$
para todo $z\in Codom\tau$, por tanto $x=y$ y entonces $x\tau^{-1}\circ\tau x$
y luego $\tau^{-1}\circ\tau\subset id_{Dom\tau}$.\\
(2) La demostración es análoga a la parte (1), intercambiando los
roles de $\tau$ y $\tau^{-1}$.
\end{proof}
Obsérvese que en la proposición, el hecho de que $\tau^{-1}\circ\tau=id_{Dom\tau}$
implica que $\tau$ es una sección y análogamente $\tau\circ\tau^{-1}=id_{Codom\tau}$
implica que $\tau$ es retracción. Con esto entonces se ha obtenido
una parte para caracterizar los monomorfismos y epimorfismos en $\text{\textbf{Rel}}(D^{\#})$
puesto que toda sección es un monomorfismo y toda retracción es un
epimorfismo.

\section{Resultados de este estudio}

Para completar el proceso, primero se define la siguiente función:
\begin{defi}
\normalfont
Dada una relación $X\overset{\tau}{\longrightarrow}Y$ en $\text{\textbf{Rel}}(D^{\#})$
y $A\subseteq X$, se define la imagen de $A$ bajo $\tau$ como
\[
Im_{\tau}(A)=\left\{ b\in Y\,:\,\exists a\in A,\,a\tau b\right\} .
\]
Si $\mathcal{P}(X)$ denota el conjunto potencia de $X$ y dado $A\in\mathcal{P}(X)$
, se define la función asociada a $\tau$ como, 
\begin{eqnarray}
f_{\tau}:\mathcal{P}(X) & \longrightarrow & \mathcal{P}(Y) \nonumber \\
A & \longmapsto & Im_{\tau}(A).
\end{eqnarray}
Se puede observar que $f_{\tau}$ está bien definida puesto que si
$X\overset{\tau}{\longrightarrow}Y$ es una relación en $\text{\textbf{Rel}}(D^{\#})$,
$A,B\in\mathcal{P}(X)$ son tales que $A=B$ entonces $Im_{\tau}(A)=Im_{\tau}(B)$.
La siguiente proposición relaciona estos nuevos conceptos con los
anteriores.
\end{defi}

\begin{prop}
\label{proposition:1-1, sobre}Dada una relación $X\overset{\tau}{\longrightarrow}Y$
en $\text{\textbf{Rel}}(D^{\#})$. Si $f_{\tau}$ es inyectiva, entonces
$\tau$ es una correspondencia. Análogamente, si $f_{\tau}$ es sobreyectiva,
entonces $\tau$ es sobreyectiva.
\end{prop}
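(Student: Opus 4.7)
El plan es demostrar cada una de las dos implicaciones por reducción al absurdo (o, equivalentemente, por contrapositivo), aprovechando que el comportamiento de $f_{\tau}$ sobre conjuntos de prueba muy pequeños (el vacío y los singuletes) fuerza el comportamiento puntual de la relación $\tau$. La observación clave es que $Im_{\tau}(\emptyset)=\emptyset$ y que $Im_{\tau}(\{x\})=\tau[x]$, por lo que las patologías de $\tau$ en un punto se reflejan como igualdades o desigualdades entre ciertos valores de $f_{\tau}$.

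Para la primera afirmación, supondría que $f_{\tau}$ es inyectiva y que, sin embargo, existe $x\in X=Dom\,\tau$ con $x\notin Coim\,\tau$. Por la definición de coimagen esto significa que ningún $y\in Y$ cumple $x\tau y$, es decir, $Im_{\tau}(\{x\})=\emptyset=Im_{\tau}(\emptyset)$, de modo que $f_{\tau}(\{x\})=f_{\tau}(\emptyset)$ pese a ser $\{x\}\neq\emptyset$, lo cual contradice la inyectividad de $f_{\tau}$. Se concluye así que todo $x\in Dom\,\tau$ pertenece a $Coim\,\tau$, y la inclusión inversa se tiene por definición, así que $\tau$ resulta ser una correspondencia.

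Para la segunda afirmación, tomaría un elemento arbitrario $y\in Y=Codom\,\tau$ y aplicaría la sobreyectividad de $f_{\tau}$ al singulete $\{y\}\in\mathcal{P}(Y)$, obteniendo $A\in\mathcal{P}(X)$ tal que $Im_{\tau}(A)=\{y\}$. De aquí se extrae directamente algún $a\in A\subseteq X$ con $a\tau y$, lo que muestra que $y\in Im\,\tau$; por tanto $Codom\,\tau\subseteq Im\,\tau$ y la contención recíproca es inmediata, estableciendo que $\tau$ es sobreyectiva.

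El obstáculo técnico principal es esencialmente inexistente: toda la demostración descansa en escoger los conjuntos de prueba correctos ($\emptyset$ frente a $\{x\}$ en la parte inyectiva y $\{y\}$ en la parte sobreyectiva) y en leer con cuidado las definiciones de $Im_{\tau}$, $Coim\,\tau$ e $Im\,\tau$. Vale la pena anotar que los recíprocos no son necesarios para este enunciado, pero podrían fallar en general, y por ello conviene mantener la redacción estrictamente en la dirección indicada.
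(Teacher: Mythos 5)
Tu propuesta es correcta. En la primera implicación sigues esencialmente el mismo camino que el artículo: si $\tau$ no es correspondencia hay un conjunto no vacío (tú tomas el singulete $\{x\}$, el artículo un $A\neq\emptyset$ genérico) cuya imagen coincide con $Im_{\tau}(\emptyset)=\emptyset$, lo que destruye la inyectividad de $f_{\tau}$. En la segunda implicación tu ruta es distinta y, de hecho, más limpia: argumentas directamente aplicando la sobreyectividad de $f_{\tau}$ al singulete $\{y\}$ para producir un $A\in\mathcal{P}(X)$ con $Im_{\tau}(A)=\{y\}$ y de ahí un $a$ con $a\tau y$. El artículo intenta en cambio el contrapositivo y lo redacta con tropiezos: invoca $Coim_{\tau}(B)$ para $B\subseteq Y$ (objeto que no se ha definido, pues $f_{\tau}$ tiene dominio $\mathcal{P}(X)$) y concluye que ``$f_{\tau}$ no es función'', cuando lo que debería concluir es que $\{y\}$ no está en la imagen de $f_{\tau}$ y por tanto $f_{\tau}$ no es sobreyectiva. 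Tu versión directa evita por completo ese enredo; la versión contrapositiva del artículo, bien escrita, tendría la virtud de ser simétrica con la primera parte, pero tal como está tu argumento es el que cierra sin objeciones. Tu observación final sobre que los recíprocos no se necesitan (y podrían fallar) es pertinente y no afecta la validez de la prueba.
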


\begin{proof}
Si se asume que $\tau$ no es una correspondencia, entonces existe
$\emptyset\neq A\in\mathcal{P}(X)$ tal que $Im_{\text{\ensuremath{\tau}}}(A)=\emptyset$
. Pero $Im_{\tau}(\emptyset)=\emptyset$ y entonces $f_{\tau}$ no
es inyectiva. Análogamente, asumir que $\tau$ no es sobreyectiva
implica que existe $\emptyset\neq B\in\mathcal{P}(Y)$ tal que $Coim_{\tau}(B)=\emptyset$,
pero $Coim_{\tau}(\emptyset)=\emptyset$ y luego $f_{\tau}(\emptyset)=f_{\tau}(B)$
y $f_{\tau}$ no es función, esto es una contradicción.
\end{proof}
Obsérvese el comportamiento simétrico en la prueba, tal comportamiento
es usual cuando se trabaja con monomorfismos y epimorfismos y se hace
uso de ello en los siguientes resultados. 
\begin{prop}
Una relación $X\overset{\tau}{\longrightarrow}Y$ en $\text{\textbf{Rel}}(D^{\#})$
es un monomorfismo si y solo si $f_{\tau}$ es inyectiva. Análogamente,
$\tau$ es epimorfismo si y solo si $f_{\tau}$ es sobreyectiva.
\end{prop}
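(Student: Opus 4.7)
El plan consiste en reducir las nociones categóricas de monomorfismo y epimorfismo a propiedades conjuntistas de $f_{\tau}$, mediante relaciones ``sonda'' construidas sobre objetos singulete. Fijado $z_{0}\in D^{\#}$, sea $Z=\{z_{0}\}$; para cada $A\subseteq X$ defino $\sigma_{A}=Z\times A:Z\longrightarrow X$, y un cálculo directo entrega $\tau\circ\sigma_{A}=Z\times f_{\tau}(A)$. Análogamente, fijados $w_{0}\in D^{\#}$ y $W=\{w_{0}\}$, para cada $B\subseteq Y$ tomo $\rho_{B}=B\times W:Y\longrightarrow W$, obteniéndose $\rho_{B}\circ\tau=\{x\in X:\tau[x]\cap B\neq\emptyset\}\times W$.

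Con esta construcción, la equivalencia para monomorfismos resulta casi inmediata. En $(\Longrightarrow)$, si $\tau$ es mono y $f_{\tau}(A)=f_{\tau}(B)$, entonces $\tau\circ\sigma_{A}=\tau\circ\sigma_{B}$ obliga a $\sigma_{A}=\sigma_{B}$, luego $A=B$. En $(\Longleftarrow)$, para $\sigma,\sigma':Z'\longrightarrow X$ con $\tau\circ\sigma=\tau\circ\sigma'$, se observa que $(\tau\circ\sigma)[z]=f_{\tau}(\sigma[z])$ para todo $z\in Z'$, y la inyectividad de $f_{\tau}$ fuerza $\sigma[z]=\sigma'[z]$, es decir, $\sigma=\sigma'$.

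Para los epimorfismos, establecería primero el siguiente lema auxiliar: $f_{\tau}$ es sobreyectiva si y solo si para cada $y_{0}\in Y$ existe $x\in X$ con $\tau[x]=\{y_{0}\}$. Una implicación es clara, pues de $\{y_{0}\}=f_{\tau}(A)=\bigcup_{a\in A}\tau[a]$ debe haber $a\in A$ con $\tau[a]=\{y_{0}\}$; la recíproca, porque una vez exhibidos tales $x_{y_{0}}$, cualquier $B\subseteq Y$ se escribe como $f_{\tau}(\{x_{y_{0}}:y_{0}\in B\})$. Con el lema, la dirección $(\Longleftarrow)$ es directa: si $\rho_{1}\neq\rho_{2}$ difieren en algún $(y_{0},w_{0})$, se toma $a$ con $\tau[a]=\{y_{0}\}$ y resulta $(\rho_{i}\circ\tau)[a]=\rho_{i}[y_{0}]$, valores que difieren en $w_{0}$, luego $\rho_{1}\circ\tau\neq\rho_{2}\circ\tau$ y $\tau$ es epi.

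El paso principal, y que espero sea el mayor obstáculo, es la dirección $(\Longrightarrow)$ del caso epi. Procedería por contrapositiva: suponiendo que para algún $y_{0}\in Y$ ningún $x$ cumple $\tau[x]=\{y_{0}\}$, construyo las sondas $\rho_{1}=Y\times W$ y $\rho_{2}=(Y\setminus\{y_{0}\})\times W$, que difieren precisamente en $(y_{0},w_{0})$. Un cálculo muestra que $(\rho_{1}\circ\tau)[x]=W$ si y solo si $\tau[x]\neq\emptyset$, mientras que $(\rho_{2}\circ\tau)[x]=W$ si y solo si $\tau[x]\not\subseteq\{y_{0}\}$. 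Bajo la hipótesis, estos eventos coinciden para todo $x$, de modo que $\rho_{1}\circ\tau=\rho_{2}\circ\tau$ pese a $\rho_{1}\neq\rho_{2}$, contradiciendo que $\tau$ sea epi. Invocando el lema, se concluye que $f_{\tau}$ es sobreyectiva.
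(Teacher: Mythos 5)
Tu demostración es correcta, pero sigue una ruta genuinamente distinta de la del artículo. Para los monomorfismos, el artículo sondea con relaciones constantes $C_{x},C_{x'}:X\longrightarrow X$ asociadas a dos puntos con imagen común y, en la dirección recíproca, compara $Im_{\tau\circ\tau_{1}}(A)$ con $Im_{\tau}(Im_{\tau_{1}}(A))$ recorriendo todos los subconjuntos $A$; tú, en cambio, codificas cada subconjunto $A\subseteq X$ en una sola relación $\sigma_{A}=Z\times A$ desde un singulete, y la identidad $\tau\circ\sigma_{A}=Z\times f_{\tau}(A)$ convierte ambas direcciones en una línea (además, al sondear con subconjuntos en lugar de puntos evitas el paso del artículo que supone tácitamente que todo $x\in A$ tiene imagen no vacía bajo $\tau$). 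La diferencia más sustancial está en el caso de los epimorfismos: el artículo lo despacha con \emph{el otro caso es análogo}, pero ese caso no es un mero espejo del anterior, pues la sobreyectividad de $f_{\tau}$ equivale ---como estableces en tu lema--- a que cada $y_{0}\in Y$ admita un $x$ con $\tau\left[x\right]=\left\{ y_{0}\right\} $, y tanto ese lema como las sondas $\rho_{1}=Y\times W$ y $\rho_{2}=(Y\setminus\left\{ y_{0}\right\} )\times W$ son ingredientes nuevos que el argumento de monomorfismos no sugiere; tu versión deja así completamente demostrada la mitad que el artículo omite. Lo único que conviene señalar explícitamente es que las sondas requieren $D^{\#}\neq\emptyset$ para disponer de los singuletes $Z$ y $W$ como objetos de $\text{\textbf{Rel}}(D^{\#})$, hipótesis inocua y también implícita en las relaciones constantes que usa el artículo.
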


\begin{proof}
Se presenta el caso para monomorfismos, el otro caso es análogo.\\
$\left(\Longrightarrow\right)$ Sean $A,\,B\subseteq X$ tales que
$Im_{\tau}(A)=Im_{\tau}(B)$ y $A\neq B$. Sin pérdida de generalidad,
supóngase que existe $x\in A$ tal que $x\notin B$. Luego existe
$y\in Im_{\tau}(A)$ tal que $x\tau y$ y como $Im_{\tau}(A)=Im_{\tau}(B)$
entonces existe $x'\in B$ tal que $x'\tau y$. Sean $X\overset{C_{x}}{\longrightarrow}X$
y $X\overset{C_{x'}}{\longrightarrow}X$ morfismos en $\text{\textbf{Rel}}(D^{\#})$
tales que para todo $a\in X$, $aC_{x}x$ y $aC_{x'}x'$, es decir,
morfismos constantes en $x$ y $x'$ respectivamente. 
\[
\xymatrix{X\ar@<-0.4ex>[r]^{C_{x}}\ar@<0.4ex>[r]_{C_{x'}} & X\ar[r]^{\tau} & Y}
\]
Entonces por la definición de composición se tiene que para todo $a\in X$,
$a\tau\circ C_{x}y$ y $a\tau\circ C_{x'}y$ luego $\tau\circ C_{x}=\tau\circ C_{x'}$,
como $\tau$ es monomorfismo entonces $C_{x}=C_{x'}$ y por tanto
$x=x'$. Esto contradice que $x\notin B$ y por tanto $A=B$, es decir
que $f_{\tau}$ es una función inyectiva.\\
$\left(\Longleftarrow\right)$ Sean $W\overset{\tau_{1}}{\longrightarrow}X$
y $W\overset{\tau_{2}}{\longrightarrow}X$ morfismos en $\text{\textbf{Rel}}(D^{\#})$
tales que $\tau\circ\tau_{1}=\tau\circ\tau_{2}$. 
\[
\xymatrix{W\ar@<-0.4ex>[r]^{\tau_{1}}\ar@<0.4ex>[r]_{\tau_{2}} & X\ar[r]^{\tau} & Y}
\]
Si $a\tau_{1}b$ para algún $b\in X$ entonces por la Proposición
\ref{proposition:1-1, sobre}, como $\tau$ es una correspondencia, existe
$x\in Y$ tal que $b\tau x$. Luego $a\tau\circ\tau_{1}x$ y entonces
$a\tau\circ\tau_{2}x$, entonces existe $c\in X$ tal que $a\tau_{2}c$
y $c\tau x$, esto brinda la existencia de tal composición. Se puede
observar que si dado $A\subseteq W$ tal que $Im_{\tau\circ\tau_{1}}(A)=Im_{\tau\circ\tau_{2}}(A)$
entonces $Im_{\tau}(Im_{\tau_{1}}(A))=Im_{\tau}(Im_{\tau_{2}}(A))$,
esta igualdad se da porque 
\begin{eqnarray}
Im_{\tau}\left(Im_{\tau_{1}}(A)\right) & = & \left\{ x\in Y\,:\,\exists b\in Im_{\tau_{1}}(A),\,b\tau x\right\} \nonumber \\
 & = & \left\{ x\in Y\,:\,\exists b\in X,\,\exists a\in A\,:\,a\tau_{1}b\text{ y }b\tau x\right\} 
\end{eqnarray}
 y por otro lado se tiene que 
\begin{eqnarray}
Im_{\tau\circ\tau_{1}}(A) & = & \left\{ x\in Y\,:\,\exists a\in A,\,a\tau\circ\tau_{1}x\right\} ,\nonumber \\
 & = & \left\{ x\in Y\,:\,\exists a\in A,\,\exists b\in X\,:\,a\tau_{1}b\text{ y }b\tau x\right\} 
\end{eqnarray}
como $f_{\tau}$ es inyectiva se tiene que $Im_{\tau_{1}}(A)=Im_{\tau_{2}}(A)$,
para todo $A\subseteq W$, por lo tanto $\tau_{1}=\tau_{2}$. Así,
$\tau$ es un monomorfimo. 
\end{proof}
Las proposiciones anteriores se pueden resumir en el
siguiente diagrama y análogamente para el caso en el que $\tau$ es
epimorfismo.
\begin{figure}[H]
\[
\xymatrix{\tau\text{ monorfismo} & \tau\text{ sección}\ar@{=>}[l]\ar@{<=>}[r] & \tau\text{ correspondencia inyectiva}\\
 & f_{\tau}\text{ inyectiva}\ar@{<=>}[ul]\ar@{=>}[r] & \tau\text{ correspondencia}
}
\]

\caption{\label{fig:monomorfismos}Implicaciones sobre monomorfismos en $\text{\textbf{Rel}}(D^{\#})$}
\end{figure}

Con los resultados anteriores ya se puede dar una caracterización
completa de los isomorfismos en $\text{\textbf{Rel}}(D^{\#})$, necesaria
para dar las propiedades de una estructura de factorización.
\begin{prop}
Una relación $X\overset{\tau}{\longrightarrow}Y$ en $\text{\textbf{Rel}}(D^{\#})$
es un isomorfismo si y solo si $\tau$ es una función biyectiva. 
\end{prop}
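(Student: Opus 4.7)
La estrategia es reducir ambas implicaciones a la Proposición \ref{proposition:section n retract}, que caracteriza las identidades $\tau^{-1}\circ\tau = id_{Dom\tau}$ y $\tau\circ\tau^{-1} = id_{Codom\tau}$. La dirección $(\Leftarrow)$ será inmediata: si $\tau$ es una función biyectiva, entonces es a la vez correspondencia inyectiva y función parcial sobreyectiva, de modo que por las partes (1) y (2) de dicha proposición se obtienen $\tau^{-1}\circ\tau = id_X$ y $\tau\circ\tau^{-1} = id_Y$; en consecuencia $\tau^{-1}$ sirve como inverso bilateral categórico de $\tau$ y $\tau$ resulta un isomorfismo.

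La dirección $(\Rightarrow)$ requiere más cuidado. Partiendo de un inverso categórico $\sigma:Y\longrightarrow X$ con $\sigma\circ\tau = id_X$ y $\tau\circ\sigma = id_Y$, el paso central del plan consiste en probar la igualdad relacional $\sigma = \tau^{-1}$. Una vez logrado esto, la Proposición \ref{proposition:section n retract} aplicada a ambas ecuaciones (ya con $\sigma$ sustituido por $\tau^{-1}$) entrega de inmediato que $\tau$ es correspondencia, inyectiva, función parcial y sobreyectiva, es decir, una función biyectiva.

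Para probar $\sigma = \tau^{-1}$ plantearé un argumento de persecución de pares en cada contenencia. Para $\sigma\subseteq\tau^{-1}$: dado $y\sigma x$, usaré $x(\sigma\circ\tau)x$ para obtener un $y''\in Y$ con $x\tau y''$ y $y''\sigma x$; entonces $y\sigma x$ junto con $x\tau y''$ producen $y(\tau\circ\sigma)y''$, y como $\tau\circ\sigma = id_Y$ se concluye $y = y''$, de donde $x\tau y$. La contenencia recíproca $\tau^{-1}\subseteq\sigma$ se argumenta simétricamente, empleando $y(\tau\circ\sigma)y$ para producir un $x''$ auxiliar y después $\sigma\circ\tau = id_X$ para identificarlo con $x$.

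El principal obstáculo técnico es precisamente la identificación $\sigma = \tau^{-1}$. En $\text{\textbf{Rel}}(D^{\#})$ un morfismo puede admitir inversos laterales que no coincidan con su relación inversa (por ello la Proposición \ref{proposition:section n retract} se formula específicamente en términos de $\tau^{-1}$ y no de un inverso lateral genérico), de modo que es esencial explotar simultáneamente las dos ecuaciones de inverso bilateral; ninguna por separado basta para fijar $\sigma$.
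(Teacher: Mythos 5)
Tu propuesta es correcta y sigue en esencia la misma ruta que el artículo: reducir ambas direcciones a la Proposición~\ref{proposition:section n retract}. La dirección $(\Leftarrow)$ coincide con la del texto. En la dirección $(\Rightarrow)$, sin embargo, haces explícito un paso que el artículo omite: la prueba del texto dice simplemente que un isomorfismo es sección y retracción y ``por la Proposición~\ref{proposition:section n retract}'' concluye, pero esa proposición está formulada en términos de $\tau^{-1}$, no de un inverso lateral arbitrario $\sigma$, y como bien observas en $\text{\textbf{Rel}}(D^{\#})$ los inversos laterales no son únicos ni tienen por qué coincidir con la relación inversa. Tu lema intermedio $\sigma=\tau^{-1}$, demostrado con la persecución de pares usando simultáneamente $\sigma\circ\tau=id_X$ y $\tau\circ\sigma=id_Y$, es correcto (ambas contenencias funcionan tal como las describes) y es precisamente lo que se necesita para que la invocación de la Proposición~\ref{proposition:section n retract} sea legítima. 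En ese sentido tu argumento no es una ruta distinta sino una versión completa del mismo argumento, y cierra un hueco real en la redacción del artículo.
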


\begin{proof}
$\left(\Longrightarrow\right)$Si $X\overset{\tau}{\longrightarrow}Y$
es un isomorfismo entonces es sección y retracción. Por la proposición
\ref
{proposition:section n retract} se tiene que $\tau$ es una función
biyectiva.\\
$\left(\Longleftarrow\right)$Si $\tau$ es una función biyectiva,
entonces existe la función inversa $\tau^{-1}$ que es tal que $\tau\circ\tau^{-1}=id_{Y}$
y $\tau^{-1}\circ\tau=id_{X}$. Por lo tanto $\tau$ es un isomorfismo.
\end{proof}
Obsérvese que también se puede ahora determinar cuáles son los epimorfimos
extremales en \textbf{Rel}$(D^{\#})$, dado un epimorfismo extremal
$X\overset{e}{\longrightarrow}E$ que se factoriza como $e=m\circ f$,
con $m$ monomorfismo, entonces $m$ es isomorfismo, en \textbf{Rel$(D^{\#})$}
la definición entonces se traduce a: 

Dada una relación $\tau$ en $D^{\#}$, $\tau$ es epimorfismo extremal
si cuando $\tau=\tau_{1}\circ\tau_{2}$ con $\tau_{1}$ una correspondencia
inyectiva, se tiene que $\tau_{1}$ es una relación para la que $f_{\tau}$
es una función biyectiva. Se utiliza ahora la Proposición \ref{proposition:4.9}
para brindar propiedades que debe tener una estructura de factorización
para la categoría \textbf{Rel}$(D^{\#})$. Una $(\mathbf{E},\mathcal{M})$-estructura
de factorización en \textbf{Rel$(D^{\#})$} tiene que tener necesariamente
las siguientes propiedades:
\begin{enumerate}
\item La familia $\mathcal{M}$ está formada por relaciones $\tau$ tales
que $f_{\tau}$ es inyectiva.
\item La familia $\mathcal{M}$ contiene todas las relaciones $\tau$ que
son funciones biyectivas y es cerrada bajo composiciones.
\item La familia $\mathbf{E}$ es cerrada bajo composiciones y contiene
todos los epi-sumideros extremales(vistos como epi-sumideros singuletes),
luego $\mathbf{E}$ contiene los epimorfismos extremales
que se describieron anteriormente(ver párrafo anterior).
\item Las $(\mathbf{E},\mathcal{M})$-factorizaciones son esencialmente
únicas.
\item $\mathcal{M\cap\mathbf{E}}$ contiene todas las relaciones $\tau$
que son funciones biyectivas.
\item Si $\tau_{1}\circ\tau_{2}\in\mathcal{M}$ y $\tau_{2}\in\mathcal{M}$
entonces $\tau_{1}\in\mathcal{M}$. Además si $\tau=\tau_{1}\circ\tau_{2}$
es la $(\mathbf{E},\mathcal{M})$-factorización de $\tau\in\mathcal{M}$,
entonces $\tau_{1}$ debe ser una función biyectiva.
\item Si $\tau_{1}\circ\tau_{2}\in\mathbf{E}$ y $\tau_{2}\in\mathbf{E}$
entonces $\tau_{1}\in\mathbf{E}$.
\end{enumerate}

\section{Conclusiones}

Se han obtenido condiciones para que una relación
$\tau$ en $D^{\#}$ tenga una factorización única de la forma $\tau_{1}\circ\tau_{2}$.
Esto contribuye al estudio de \shortciteA{mendez} en
el sentido que se puede pensar específicamente en qué tipo de composiciones
$\tau_{1}\circ\tau_{2}$ son válidos los resultados allí obtenidos.
Como trabajos futuros, es deseable obtener concretamente una familia
de morfismos que tenga éstas propiedades para casos particulares de
interés. Además, se pretende analizar si existe alguna interacción
entre las condiciones brindadas y los tipos de relaciones que interesan
en la teoría de $\tau$-factorizaciones.{\pagebreak{}}

%%%%%%%%%%%%%%%%%%%%%%%%%%%%%%%%%%%%
%%%%%%%%%%%%%%%%%%%%%%%%%%%%%%%%%%%%

\bibliographystyle{apacite} %or \bibliographystyle{apalike}
\renewcommand{\refname}{Referencias}
\bibliography{YOUR_references}

\end{document}